\def\ps@pprintTitle{%
 \let\@oddhead\@empty
 \let\@evenhead\@empty
 \def\@oddfoot{\centerline{\thepage}}%
 \let\@evenfoot\@oddfoot}
\newtheorem{theorem}{Theorem}[section]
\newtheorem{lemma}[theorem]{Lemma}
\newtheorem{proposition}[theorem]{Proposition}
\newtheorem{corollary}[theorem]{Corollary}
\newdefinition{definition}[theorem]{Definition}
\newdefinition{question}[theorem]{Question}
\newdefinition{example}[theorem]{Example}
\theoremstyle{definition}
\newtheorem*{mainproblem}{Main Problem}
\newcommand{\N}{\mathbb{N}}
\newcommand{\Z}{\mathbb{Z}}
\newcommand{\R}{\mathcal{R}}
\newcommand{\F}{\mathcal{F}}
\newcommand{\res}{\restriction}
\journal{}
\begin{document}

\begin{frontmatter}

\title{Infinite Ramsey-minimal graphs for star forests}

\author{Fawwaz Fakhrurrozi Hadiputra}
\ead{fawwazfh@sci.ui.ac.id}
\author{Valentino Vito}
\ead{valentino.vito@sci.ui.ac.id}

\address{Department of Mathematics, Universitas Indonesia, Depok 16424, Indonesia}

\begin{keyword}
Ramsey-minimal graph \sep infinite graph \sep graph embedding \sep star forest \sep subdivision graph
\MSC[2020] 05C55 \sep 05C63 \sep 05C35 \sep 05C60 \sep 05D10
\end{keyword}

\begin{abstract}
For graphs $F$, $G$, and $H$, we write $F \to (G,H)$ if every red-blue coloring of the edges of $F$ produces a red copy of $G$ or a blue copy of $H$. The graph $F$ is said to be $(G,H)$-minimal if it is subgraph-minimal with respect to this property. The characterization problem for Ramsey-minimal graphs is classically done for finite graphs. In 2021, Barrett and the second author generalized this problem to infinite graphs. They asked which pairs $(G,H)$ admit a Ramsey-minimal graph and which ones do not. We show that any pair of star forests such that at least one of them involves an infinite-star component admits no Ramsey-minimal graph. Also, we construct a Ramsey-minimal graph for a finite star forest versus a subdivision graph. This paper builds upon the results of Burr et al.\ in 1981 on Ramsey-minimal graphs for finite star forests.
\end{abstract}

\end{frontmatter}

\section{Introduction}\label{sec1}

All our graphs are simple and undirected, and we allow uncountable graphs. We start by stating basic definitions. For graphs $F$, $G$ and $H$, we write $F \to (G,H)$ if every red-blue coloring of the edges of $F$ produces a red copy of $G$ or a blue copy of $H$. A red-blue coloring of $F$ is \emph{$(G,H)$-good} if it produces neither a red copy of $G$ nor a blue copy of $H$. If $F \to (G,H)$ and every subgraph $F'$ of $F$ is such that $F' \not\to (G,H)$, then $F$ is \emph{$(G,H)$-minimal}. The collection of all $(G,H)$-minimal graphs is denoted by $\R(G,H)$. A pair $(G,H)$ \emph{admits a Ramsey-minimal graph} if $\R(G,H)$ is nonempty.

If $G$ and $H$ are both finite, then a $(G,H)$-minimal graph exists. Indeed, we can delete finitely many vertices and/or edges of $K_{r(G,H)}$ until it is $(G,H)$-minimal. This observation does not necessarily hold when at least one of $G$ and $H$ is infinite, even though there exists a graph $F$ such that $F \to (G,H)$ in the countable case by the Infinite Ramsey Theorem \cite{ramsey1930} and in general by the Erd\H{o}s--Rado Theorem \cite{erdos1956}. In fact, a pair of countably infinite graphs almost never admits a Ramsey-minimal graph---see Proposition \ref{prop3}. In 2021, Barrett and the second author \cite{barrett2021} introduced a characterization problem for pairs of graphs according to whether or not they admit a minimal graph.

\begin{mainproblem}[\cite{barrett2021}]
Determine which pairs $(G,H)$ admit a Ramsey-minimal graph and which ones do not.
\end{mainproblem}

The primary motivation for posing the main problem is the classic problem of determining whether there are finitely or infinitely many $(G,H)$-minimal graphs. This problem was first introduced in 1976 \cite{burr1976,nesetril1976}, and it was studied for finite graphs in general by Ne{\v{s}}et{\v{r}}il and R{\"o}dl \cite{nesetril1978a,nesetril1978b} and for various classes of graphs by Burr et al.\ \cite{burr1978a,burr1980,burr1981a,burr1982,burr1985}. A result by Burr et al.\ on Ramsey-minimal graphs for finite star forests is relevant to our discussion.

\begin{theorem}[\cite{burr1981b}]\label{burr-et-al}
The pair of star forests $(\bigcup_{i = 1}^s S_{n_i}, \bigcup_{j = 1}^t S_{m_j})$ admits infinitely many Ramsey-minimal graphs for $n_1 \ge \dots \ge n_s \ge 2$ and $m_1 \ge \dots \ge m_t \ge 2$ when $s \ge 2$ or $t \ge 2$.
\end{theorem}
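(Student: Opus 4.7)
The plan is to exhibit an infinite family of finite graphs $\{F_k\}_{k \ge 1}$ that all arrow $(\bigcup_{i=1}^s S_{n_i}, \bigcup_{j=1}^t S_{m_j})$, and to extract from each a $(G,H)$-minimal subgraph $R_k$ in such a way that infinitely many of the $R_k$ are pairwise non-isomorphic. Write $n = n_1$, $m = m_1$, and $N = n + m - 1$ throughout.

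The starting observation is the pigeonhole principle: any red-blue coloring of a star $K_{1,N}$ yields at its hub either a red $S_n$ or a blue $S_m$. Consequently, any graph containing $s+t-1$ vertex-disjoint copies of $K_{1,N}$ arrows the pair, for by a second pigeonhole over the two colors, the $s+t-1$ hubs produce either $s$ red $S_n$'s (which contain a red $\bigcup_i S_{n_i}$ since $n_i \le n$) or $t$ blue $S_m$'s (which contain a blue $\bigcup_j S_{m_j}$). This supplies an abundance of arrowing graphs; the subtlety is to ensure that their minimal subgraphs do not all collapse to one fixed small graph.

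For each $k \ge 1$, I would take $F_k$ to be a connected graph built from $s + t - 1$ copies of $K_{1,N}$ linked along a growing backbone---for instance, by identifying their hubs with the vertices of a path or cycle of length $k$, with auxiliary edges placed so that the backbone edges become essential to the arrowing property. Each such $F_k$ is finite and arrows the pair, so by deleting edges greedily one obtains a $(G,H)$-minimal subgraph $R_k \subseteq F_k$. The conclusion would follow by showing that some graph-theoretic invariant of $R_k$---number of vertices, diameter, or the count of vertices of degree at least a suitable threshold---tends to infinity with $k$, forcing infinitely many non-isomorphism classes.

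The main obstacle is precisely this last claim. One must preclude the possibility that every $R_k$ collapses to a fixed small graph such as $(s+t-1) K_{1,N}$ (which is itself a natural Ramsey-minimal candidate). For every edge $e$ that one wants to certify as essential, one must exhibit a $(G,H)$-good coloring of $F_k - e$. For star forests, such colorings correspond to degree-constrained partitions of the edge set: the red subgraph must avoid $s$ vertex-disjoint stars of the required sizes, and symmetrically for blue. Engineering $F_k$ and verifying this condition along its backbone, tightly enough that the chosen invariant forces $R_k$ to grow with $k$, is the combinatorial heart of the theorem.
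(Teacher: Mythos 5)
This theorem is not proved in the paper at all: it is quoted from Burr, Erd\H{o}s, Faudree, Rousseau, and Schelp (1981), so there is no internal proof to compare against and your proposal must stand on its own. Its opening observations are correct: with $N=n_1+m_1-1$, every coloring of $K_{1,N}$ forces a red $S_{n_1}$ or a blue $S_{m_1}$ at the hub, and a double pigeonhole gives $(s+t-1)K_{1,N}\to(\bigcup_{i=1}^s S_{n_i},\bigcup_{j=1}^t S_{m_j})$. Everything after that, however, is a plan rather than an argument, and the plan runs directly into the obstacle you yourself name. Any $F_k$ obtained by stringing $s+t-1$ copies of $K_{1,N}$ along a backbone still contains the fixed arrowing graph $(s+t-1)K_{1,N}$ as a proper subgraph, so for every backbone edge $e$ the graph $F_k-e$ still arrows the pair; consequently no $(G,H)$-good coloring of $F_k-e$ exists, $e$ can never be certified as essential, and greedy deletion is free to discard the entire backbone. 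Every extracted $R_k$ can therefore land inside the single graph $(s+t-1)K_{1,N}$, and no invariant of $R_k$ is forced to grow with $k$. The phrase ``with auxiliary edges placed so that the backbone edges become essential'' is precisely the missing construction, and as long as the full-degree-$N$ hubs are retained it cannot be realized.

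The actual content of the theorem lies in producing arrowing graphs that contain \emph{no} arrowing subgraph of bounded size, which forces one to keep all degrees below $n_1+m_1-1$ and to assemble the monochromatic star forests from many vertices of smaller degree; this is exactly where the hypotheses $s\ge 2$ or $t\ge 2$ and $n_i,m_j\ge 2$ enter (for $s=t=1$ a single high-degree hub suffices and, except for a parity case, essentially only one minimal graph exists). The cited construction takes unions of many smaller stars as the candidates and verifies minimality by exhibiting an explicit good coloring after the deletion of an arbitrary edge. Your write-up contains neither such a family nor any of these good colorings, so the statement is not established; this is a genuine gap, not a presentational one.
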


The formulation of the main problem is also motivated by the more recent work of Stein \cite{stein2010,stein2011a,stein2011b} on extremal infinite graph theory. It is a subfield of extremal graph theory that developed after the notion of end degrees was introduced a few years prior \cite{bruhn2007,stein2007}.

Barrett and the second author mainly studied the main problem for pairs $(G,H)$ in general. The following is one of the main results presented in their paper.

\begin{theorem}[\cite{barrett2021}]\label{bar-vit}
Let $G$ and $H$ be graphs, and suppose that $\F$ is a (possibly infinite) collection of graphs such that:
\begin{enumerate}[\normalfont 1.]
    \item For all $F \in \F$, we have $F \to (G,H)$.
    \item For every graph $\Gamma$ with $\Gamma \to (G,H)$, there exists an $F \in \F$ that is contained in $\Gamma$.
\end{enumerate}
The following statements hold:
\begin{enumerate}[\normalfont(i)]
    \item If $F$ is a $(G,H)$-minimal graph, then $F \in \F$ and $F$ is non--self-embeddable.
    \item Suppose that any two different graphs $F_1,F_2 \in \F$ do not contain each other. A graph $F$ is $(G,H)$-minimal if and only if $F \in \F$ and $F$ is non--self-embeddable.
\end{enumerate}
\end{theorem}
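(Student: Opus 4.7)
The plan is to exploit the monotonicity of the arrowing relation: if a graph $\Gamma$ embeds into a graph $\Gamma'$, then any red--blue coloring of $\Gamma'$ restricts to one of $\Gamma$, so $\Gamma \to (G,H)$ implies $\Gamma' \to (G,H)$. Together with the two defining properties of $\F$, this is essentially everything needed, so I would prove (i) first and then reuse it in both directions of (ii).

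For (i), suppose $F$ is $(G,H)$-minimal. Then $F \to (G,H)$, so property (2) produces some $F_0 \in \F$ with $F_0 \subseteq F$, and property (1) gives $F_0 \to (G,H)$; minimality of $F$ then forces $F_0 = F$, so $F \in \F$. For non--self-embeddability, if $F$ embedded into a proper subgraph $F^* \subsetneq F$ of itself, the monotonicity observation would yield $F^* \to (G,H)$, again contradicting minimality.

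For (ii), the ``only if'' direction is (i), so it suffices to handle the ``if'' direction. Let $F \in \F$ be non--self-embeddable. Property (1) gives $F \to (G,H)$, and I would argue by contradiction that no proper subgraph of $F$ arrows $(G,H)$. Assume some $F' \subsetneq F$ satisfies $F' \to (G,H)$. Property (2) yields $F_0 \in \F$ with $F_0 \subseteq F' \subsetneq F$, so $F$ contains $F_0$; the extra hypothesis of (ii) forbids distinct members of $\F$ from containing one another, so $F_0 = F$. But then $F$ sits inside the proper subgraph $F'$ of itself, contradicting non--self-embeddability.

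The argument is short, and the main care needed is to track the distinction between literal subgraph containment, which is what the definition of $(G,H)$-minimality and the clause $F_0 \subseteq F$ in property (2) speak of, and containment up to isomorphism, which is what \emph{embeds into} and the phrase ``contained in'' in property (2) really mean. Monotonicity of arrowing is what bridges the two, and in each appeal to properties (1)--(2) either both notions coincide or the weaker one suffices; this bookkeeping is the only place where I expect the proof could go subtly wrong.
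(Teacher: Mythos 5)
Your proposal is correct: monotonicity of the arrowing relation under embeddings, combined with properties (1)--(2) of $\F$, gives (i) directly, and the extra antichain hypothesis in (ii) correctly forces $F_0 = F$ in the converse direction, after which non--self-embeddability yields the contradiction. Note that the paper itself states this theorem as an imported result from the cited reference and gives no proof, so there is nothing to compare against; your argument is the natural one and is sound, including the bookkeeping between literal subgraphs and containment up to isomorphism that you flag.
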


This paper instead focuses on pairs $(G,H)$ involving a \emph{star forest}---a union of stars. Our first main result shows that any pair of star forests such that at least one of them involves an infinite-star component admits no Ramsey-minimal graph.

\begin{theorem}\label{mainthm1}
Let $G$ and $H$ be star forests. If at least one of $G$ and $H$ contains an infinite-star component, then no $(G,H)$-minimal graph exists.
\end{theorem}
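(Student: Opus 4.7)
Assume without loss of generality that $G$ has an infinite-star component, and write $G = S_\infty \cup G'$ for some (possibly empty) star forest $G'$. Suppose $F \to (G, H)$; since coloring all edges of $F$ red must produce a red copy of $G$, the graph $F$ contains $G$, and in particular it has a vertex $v$ of infinite degree. My plan is to prove that $F - e \to (G, H)$ for any edge $e = vu$ incident to $v$, producing a proper subgraph of $F$ that arrows and thereby contradicting the supposed $(G,H)$-minimality of $F$.

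Suppose, for contradiction, that $c$ is a $(G, H)$-good coloring of $F - e$. Extending $c$ to $F$ by coloring $e$ red (respectively blue) yields a coloring of $F$; since $F \to (G, H)$, the red extension must create a red $G$ using $e$ (blue being unchanged), while the blue extension must create a blue $H$ using $e$. The key observation is that $e$ cannot be an edge of the $S_\infty$ component in the embedded red $G$: if it were, then removing $e$ from this embedded $S_\infty$ would still leave an infinite star (since $S_\infty - e \cong S_\infty$), and the other components of $G$, being vertex-disjoint from $e$, would be unaffected, so the copy of $G$ would already sit inside $F - e$, contradicting the goodness of $c$. Hence $e$ must lie in a finite-star component $S_{n_i}$ of the embedded red $G$; analogously, when $H$ also contains an $S_\infty$ component, the edge $e$ must lie in a finite-star component of the embedded blue $H$.

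For the final contradiction I would exploit that $\deg_F(v) = \infty$ forces $v$ to have either infinite red or infinite blue degree under $c$. If the red degree is infinite, I re-embed a red copy of $G$ inside $F - e$ as follows: place $S_\infty$ at the vertex which centered it in the extension's embedding (whose infinite red degree persists in $c$), and re-center the offending $S_{n_i}$ at $v$ using $n_i$ edges from $v$'s infinite red neighborhood, with leaves picked to avoid the vertex sets of the other embedded components. Since $v$ has enough room in its neighborhood to dodge the previously used vertices, such a choice is available, yielding a red $G$ in $F - e$ and contradicting the goodness of $c$. A symmetric re-embedding argument using the blue extension handles the case when $v$ has infinite blue degree and $H$ contains $S_\infty$; when $H$'s components are all finite, the infinite blue star at $v$ directly realizes any single $S_{m_j}$, and one similarly re-routes the remaining components of $H$ in blue to reach the contradiction. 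The hard part will be the disjointness bookkeeping---in particular, confirming that the re-embedded $S_{n_i}$ and $S_\infty$ can be kept vertex-disjoint from the other unchanged components of $G$; this ultimately relies on the cardinality of $v$'s neighborhood being large enough to dodge any obstruction inherited from the extension's embedding, which is precisely the feature distinguishing this infinite-star setting from the finite-star situation treated by Burr et al.
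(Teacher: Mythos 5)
Your proposal is correct and follows essentially the same route as the paper: locate a vertex $v$ of infinite degree in $F$, delete an edge $e$ at $v$, extend a putative good coloring of $F-e$ by giving $e$ the color in which $v$ has infinitely many edges, observe that the forced monochromatic copy must use $e$ inside a finite-star component (if $e$ sat in an infinite-star component, deleting $e$ would already leave a copy in $F-e$), and exchange that finite star for one re-centered at $v$. The only caveat concerns your justification of the disjointness step: one cannot always ``dodge'' the other embedded components by a cardinality count, since all but finitely many of $v$'s red neighbours may be leaves of the embedded infinite star; the exchange still works because one may simply delete the finitely many stolen leaves from that infinite component, and the paper's own proof is equally terse at exactly this point.
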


This theorem is in contrast to Theorem \ref{burr-et-al}, which states that there are infinitely many $(G,H)$-minimal graphs when $G$ and $H$ are disconnected finite star forests with no single-edge components. Loosely speaking, the existence of infinitely many finite minimal graphs hence does not give an indication that a corresponding infinite minimal graph exists.

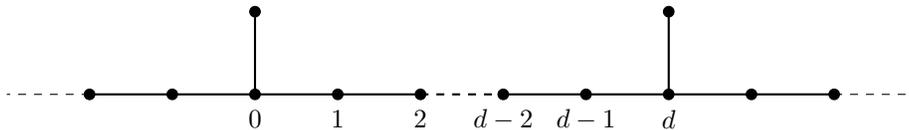
\begin{figure}
    \centering
    \begin{tikzpicture}[x=11mm,y=11mm]
        \draw[thick] (-1,0) -- (3,0);
        \draw[thick] (4,0) -- (8,0);
        \draw[thick] (1,0) -- (1,1);
        \draw[thick] (6,0) -- (6,1);
        \draw[thick,dashed] (3,0) -- (4,0);
        \draw[dashed] (-1,0) -- (-2,0);
        \draw[dashed] (8,0) -- (9,0);
        
        \draw[fill=black] (-1,0) circle (2pt);
        \draw[fill=black] (0,0) circle (2pt);
        \draw[fill=black] (1,0) circle (2pt);
        \draw[fill=black] (2,0) circle (2pt);
        \draw[fill=black] (3,0) circle (2pt);
        \draw[fill=black] (4,0) circle (2pt);
        \draw[fill=black] (5,0) circle (2pt);
        \draw[fill=black] (6,0) circle (2pt);
        \draw[fill=black] (7,0) circle (2pt);
        \draw[fill=black] (8,0) circle (2pt);
        \draw[fill=black] (1,1) circle (2pt);
        \draw[fill=black] (6,1) circle (2pt);
        
        \node at (1,-0.3) {$0$};
        \node at (2,-0.3) {$1$};
        \node at (3,-0.3) {$2$};
        \node at (4,-0.3) {$d-2$};
        \node at (5,-0.3) {$d-1$};
        \node at (6,-0.3) {$d$};
    \end{tikzpicture}
    
    \caption{The graph $F_d$.}
    \label{fig1}
\end{figure}

Similarly, the existence of only finitely many finite minimal graphs does not imply that there are only finitely many corresponding infinite minimal graphs. For $n \in \N$---where $\N$ is the set of positive integers---we denote the $n$-edge star by $S_n$. It is known from \cite{burr1978b} that there are only finitely many $(nS_1,H)$-minimal graphs for $n \in \N$ and $H$ a finite graph. On the other hand, if $\Z$ is the \emph{double ray}---the two-way infinite path---then $(2S_1,\Z \cup S_3)$ admits infinitely many minimal graphs. Indeed, we have $2F_d \in \R(2S_1,\Z \cup S_3)$ for every $d \ge 3$, where $F_d$ is the graph illustrated in Figure \ref{fig1}.

A graph is \emph{leafless} if it contains no vertex of degree one, and it is \emph{non--self-embeddable} if it is not isomorphic to any proper subgraph of itself. Following \cite[p.\ 79]{harary1969}, we denote the \emph{subdivision graph} of $G$ by $S(G)$, which is a graph obtained from $G$ by performing a subdivision on each one of its edges. For example, if $P_n$ denotes the $n$-vertex path, then $S(P_n)=P_{2n-1}$ for $n \in \N$.

For our second main result, we construct a Ramsey-minimal graph for a finite star forest versus the subdivision graph of a connected, leafless, non--self-embeddable graph. In 2020, subdivision graphs were used by Wijaya et al.\ \cite{wijaya2020} to construct new $(nS_1,P_4)$-minimal graphs.

\begin{theorem}\label{mainthm2}
Let $G$ be a connected, leafless, non--self-embeddable graph. For any finite star forest $H$, there exists a $(S(G),H)$-minimal graph.
\end{theorem}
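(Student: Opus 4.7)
My approach is to apply Theorem~\ref{bar-vit} to an explicitly constructed family $\F$. The first step is to characterize arrowing combinatorially: writing $H = \bigcup_{j=1}^{t} S_{m_j}$ with $m_1 \ge \cdots \ge m_t \ge 1$, a red-blue coloring is $(S(G),H)$-good exactly when the red subgraph omits $S(G)$ and the blue subgraph admits no system of $t$ vertex-disjoint stars of sizes $m_1,\ldots,m_t$. Since $H$ is finite, failure to contain $H$ is a finitary condition, while the rigidity of $S(G)$ (a connected leafless graph) forces red copies of $S(G)$ on appropriate subgraphs.

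The family $\F$ will consist of augmentations of $S(G)$ by finitely many pendant stars. Starting from a copy of $S(G)$, I choose a finite vertex set $v_1,\ldots,v_k$ and attach at each $v_i$ a pendant star of size depending on $H$. The parameters---which vertices are chosen, and how large the pendant stars are---are calibrated so that any $(S(G),H)$-good coloring would have to devote too many pendant edges at disjoint centers to blue, assembling into a copy of $H$ and contradicting goodness. The allowed augmentations $F \in \F$ are those for which $F \to (S(G),H)$ but no proper subgraph of $F$ does, adapting the combinatorial analysis behind Theorem~\ref{burr-et-al}. To verify condition~2 of Theorem~\ref{bar-vit}, given any $\Gamma \to (S(G),H)$ I would locate a copy of $S(G)$ in $\Gamma$ and run a pigeonhole argument at the attachment points to extract a subgraph of $\Gamma$ isomorphic to some $F \in \F$.

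The final step is to exhibit a non--self-embeddable $F^{\ast} \in \F$; by Theorem~\ref{bar-vit}(i), such $F^{\ast}$ is automatically $(S(G),H)$-minimal. The key observation is that $S(G)$ inherits non--self-embeddability from $G$: any subgraph embedding $\phi\colon S(G) \to S(G)$ respects the bipartition between original and subdivision vertices (by a degree-and-connectivity analysis that uses $G$ being connected and leafless), hence descends to a subgraph embedding $G \to G$, which must be surjective by hypothesis. Attaching a finite star augmentation preserves this rigidity provided the attachment points are chosen so that their augmented degree profile distinguishes them from all other vertices; the resulting $F^{\ast}$ is then non--self-embeddable. The main obstacle is precisely this final rigidity step, especially when $G$ contains vertices of degree $2$, which are indistinguishable from subdivision vertices by degree alone: one must choose the attachment points and pendant-star sizes so that a degree-count argument rules out nontrivial mixing between original and subdivision vertices, after which Theorem~\ref{bar-vit} delivers the desired minimal graph.
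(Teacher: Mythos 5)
Your construction is the wrong one, and the failure is concrete: a pendant-star augmentation of a single copy of $S(G)$ never arrows $(S(G),H)$ once $H$ contains a star of size at least $2$. Since $S(G)$ is leafless, every vertex of an embedded copy of $S(G)$ must have degree at least $2$ inside that copy, so no pendant leaf (and hence no pendant-star vertex) can belong to a copy of $S(G)$; any copy of $S(G)$ in your $F$ therefore lies inside the core copy, and by non--self-embeddability it must be the core itself. Now color exactly one core edge blue and every other edge of $F$ red: there is no red $S(G)$, and the blue subgraph is a single edge, so there is no blue $H$. Thus the family $\F$ you propose to feed into Theorem~\ref{bar-vit} contains no arrowing graphs at all. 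This is precisely where the analogy with Theorem~\ref{burr-et-al} breaks down: for finite star forests $G$, pendant edges at well-chosen vertices do extend to red copies of $G$ because $G$ has leaves, but for a leafless target the redundancy must be built \emph{inside} the copy of $S(G)$. The paper's construction $\Gamma(S(G),A,n)$ does this by replacing each subdivision vertex $u$ (of degree $2$) with $2n-1$ clones sharing $u$'s two neighbours; a pigeonhole count then shows that a coloring with no blue $S_n$ must leave, for each $u$, some clone incident only to red edges, and these clones reassemble into a red $S(G)$.

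Two further problems would remain even with a correct family. First, your appeal to Theorem~\ref{bar-vit}(i) is backwards: part (i) says minimal graphs are non--self-embeddable members of $\F$, not conversely; the converse needs the antichain hypothesis of part (ii), which you do not verify, and your proposed verification of condition~2 (that \emph{every} graph arrowing $(S(G),H)$ contains a member of $\F$) is a strong structural classification that you give no real route to. The paper avoids Theorem~\ref{bar-vit} entirely: it proves arrowing directly (Proposition~\ref{prop6}) and proves minimality by exhibiting, for each edge $e$, an explicit $(S(G),H)$-good coloring of $\Gamma-e$ (Lemma~\ref{lem2}), using the projection $\pi$ to show that a red $S(G)$ surviving in $\Gamma_i-e$ would yield a self-embedding of $S(G)$. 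Second, the one part of your outline that is essentially right --- that $S(G)$ inherits non--self-embeddability from $G$, with care needed because degree-$2$ vertices of $G$ resemble subdivision vertices --- is handled in the paper via connectivity of the bipartition (Lemma~\ref{lem1}) and $C_4$-freeness of $S(G)$ (Proposition~\ref{prop7}); but by itself it cannot rescue the construction.
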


For future investigation, it would be interesting to consider whether every pair of non--self-embeddable graphs admits a minimal graph. If true, this would generalize the observation that a pair of finite graphs always admits a minimal graph, since finite graphs are non--self-embeddable.

\begin{question}\label{ques1}
Is it true that every pair $(G,H)$ of non--self-embeddable graphs admits a Ramsey-minimal graph?
\end{question}

We give an outline of this paper. Section \ref{sec2} discusses self-embeddable graphs and their relevance to the study of Ramsey-minimal graphs. In Section \ref{sec3}, we briefly discuss the Ramsey-minimal properties of $(G,H)$ when $H$ is a union of graphs. Finally, our two main theorems are proved in Sections \ref{sec4} and \ref{sec5}.

\section{Self-embeddable graphs}\label{sec2}

We first provide several preliminary definitions. A \emph{graph homomorphism} $\varphi\colon G \to H$ is a map from $V(G)$ to $V(H)$ such that $\varphi(u)\varphi(v) \in E(H)$ whenever $uv \in E(G)$. A graph homomorphism is an \emph{embedding} if it is an injective map of vertices. Following \cite{bonato2003,bonato2006}, we write $G \le H$ if $G$ embeds into $H$; that is, there exists an embedding $\varphi\colon G \to H$. Unlike in \cite{bonato2003,bonato2006}, however, we do not require that the graph image of $\varphi$ is an induced subgraph of $H$.

A graph $G$ is \emph{self-embeddable} if $G \cong G'$ for some proper subgraph $G'$ of $G$, and the corresponding isomorphism $\varphi\colon G \to G'$ is its \emph{self-embedding}. Examples of self-embeddable graphs include the \emph{ray} $\N$---the one-way infinite path---and a complete graph on infinitely many vertices. On the other hand, finite graphs and the double ray $\Z$ are non--self-embeddable.

Proposition \ref{prop1} provides a necessary and sufficient condition for a graph to be self-embeddable in terms of its components. This proposition is quite similar to \cite[Theorem 2.5]{shekarriz2015} for \emph{self-contained graphs}, the ``induced'' version of self-embeddable graphs.

\begin{proposition}\label{prop1}
A graph $G$ is self-embeddable if and only if at least one of the following statements holds:
\begin{enumerate}[\normalfont (i)]
    \item There exists a self-embeddable component of $G$.
    \item There exists a sequence of distinct components $(C_i)_{i \in \N}$ of $G$ such that $C_1 \le C_2 \le \cdots$.
\end{enumerate}
\end{proposition}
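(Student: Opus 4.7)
My plan is to prove both directions of the equivalence. The backward direction is essentially constructive, whereas the forward direction requires analyzing how a proper self-embedding of $G$ acts on its components.

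For the backward direction, I handle (i) and (ii) separately. If (i) holds with $\psi : C \to C$ a proper self-embedding of a component $C$, extending $\psi$ by the identity on every other component of $G$ gives a proper self-embedding of $G$. If (ii) holds, pick embeddings $\psi_i : C_i \to C_{i+1}$ for each $i \in \N$, and let $\varphi$ act as $\psi_i$ on each $C_i$ and as the identity on every component not appearing in the sequence $(C_i)_{i \in \N}$. Distinctness of the $C_i$ ensures $\varphi$ is injective, and its image misses $V(C_1)$, so $\varphi$ is a proper self-embedding of $G$.

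For the forward direction, suppose $\varphi : G \to G'$ is an isomorphism onto a proper subgraph $G' \subsetneq G$. Since the image of any connected subgraph under a graph homomorphism is connected, each component $C$ of $G$ has $\varphi(C)$ contained in a unique component $\sigma(C)$ of $G$, and $C \le \sigma(C)$. If some orbit $\alpha, \sigma(\alpha), \sigma^2(\alpha), \ldots$ consists of pairwise distinct components, chaining restrictions of $\varphi$ yields the required chain $C_\alpha \le C_{\sigma(\alpha)} \le \cdots$ of distinct components in (ii). Otherwise every $\sigma$-orbit is eventually periodic, and I argue for a contradiction under the additional hypothesis that (i) also fails, that is, no component of $G$ is self-embeddable.

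The crux is then the following: if $\alpha$ lies on a $\sigma$-cycle of length $L$, then $\varphi^L$ embeds $C_\alpha$ into itself, and non--self-embeddability of $C_\alpha$ forces $\varphi^L(C_\alpha) = C_\alpha$; unwinding one step at a time shows $\varphi(C_\beta) = C_{\sigma(\beta)}$ as subgraphs for every $\beta$ on a cycle. Injectivity of $\varphi$, together with vertex-disjointness of the images of distinct components, then rules out any component outside a cycle mapping into a cycle, so every orbit is purely periodic and $\sigma$ is a bijection on the component set. Hence $G' = \bigcup_\alpha \varphi(C_\alpha) = \bigcup_\alpha C_{\sigma(\alpha)} = G$, contradicting $G' \subsetneq G$. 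The main obstacle is this final step: non--self-embeddability has to be invoked twice, first to promote the cycle embeddings to full cycle-shift isomorphisms and then to exclude pre-cycle tails, after which the permutation structure forces $\varphi$ to be a genuine automorphism of $G$, contradicting properness.
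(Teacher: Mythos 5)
Your proof is correct, and the forward direction takes a genuinely different route from the paper's. The paper anchors the argument at the component $C_0$ containing the deleted vertex or edge $p$ and proves by induction on $i$ that the iterates $\varphi^i(u)$, for $u \in V(C_0)$, land in pairwise distinct components: a collision, i.e.\ $\varphi^i(u)$ and $\varphi^j(u)$ lying in a common component $C$, would make $\varphi^{j-i}$ carry $C$ into $C-\varphi^i(u)$ (the induction hypothesis is needed precisely to verify that the image misses this vertex), contradicting non--self-embeddability; this directly exhibits the chain of (ii) along one specific orbit. You instead argue globally on the induced map $\sigma$ on components: either some $\sigma$-orbit is injective, giving (ii) at once, or all orbits are eventually periodic, in which case non--self-embeddability upgrades the cycle maps $\varphi^L\colon C_\alpha \to C_\alpha$ to surjections, injectivity of $\varphi$ excludes pre-periodic tails, and $\sigma$ becomes a permutation with $\varphi(C_\alpha)=C_{\sigma(\alpha)}$, forcing $\varphi$ to be onto $G$ and contradicting properness. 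Both arguments pivot on the same fact---an embedding of a non--self-embeddable component into itself must be surjective---but yours trades the paper's somewhat delicate induction for a cleaner orbit dichotomy followed by a contradiction, and as a by-product shows that when neither (i) nor (ii) holds any such map must permute the components. You also write out the backward direction (correctly), which the paper leaves to the reader.
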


\begin{proof}
The backward direction can be easily proved by defining a suitable self-embedding of $G$ for each of the two cases; it remains to show the forward direction.

Suppose that $G$ has a self-embedding $\varphi$ that embeds $G$ into $G-p$, where $p$ is either a vertex or an edge of $G$, and $G$ contains no self-embeddable component. Let $C_0$ be the component of $G$ containing $p$. We write $v \simeq w$ if the vertices $v$ and $w$ belong to the same component, and we denote $\varphi^k$ as the $k$-fold composition of $\varphi$.

We claim that if $u \in V(C_0)$, then for $0 \le i<j$, we have $\varphi^i(u) \not\simeq \varphi^j(u)$. We use induction on $i$. Let $i=0$, and suppose to the contrary that $u$ and $\varphi^j(u)$, where $j>0$, both belong to $C_0$. If $v \simeq u$, we then have
\[\varphi^j(v) \simeq \varphi^j(u) \simeq u,\]
so $\varphi^j(v) \in V(C_0)$ for every $v \in V(C_0)$. Also, since $\varphi$ embeds $G$ into $G-p$, the map $\varphi^j$ also embeds $G$ into $G-p$. Hence $\varphi^j$ carries $C_0$ into $C_0-p$, which contradicts the non--self-embeddability of $C_0$. Now suppose $i \ge 1$, and suppose to the contrary that $\varphi^i(u)$ and $\varphi^j(u)$, where $j>i$, both belong to the same component $C$. If $v \simeq \varphi^i(u)$, we then have
\[\varphi^{j-i}(v) \simeq \varphi^j(u) \simeq \varphi^i(u),\]
so $\varphi^{j-i}(v) \in V(C)$ for every $v \in V(C)$. We now prove that $\varphi^{j-i}$ carries $C$ into $C-\varphi^i(u)$; this would contradict the non--self-embeddability of $C$. Suppose that $\varphi^{j-i}(v)=\varphi^i(u)$ for some vertex $v$. We have $\varphi^{j-i-1}(v)=\varphi^{i-1}(u)$ by injectivity. By the induction hypothesis, we also have $\varphi^{i-1}(u) \not\simeq \varphi^{j-1}(u)$, so $\varphi^{j-i-1}(v) \not\simeq \varphi^{j-1}(u)$. It follows that $v \not\simeq \varphi^i(u)$, and since $\varphi^i(u) \in V(C)$, we infer that $v \notin V(C)$. Therefore, $\varphi^i(u)$ cannot be the image of a vertex of $C$ under $\varphi^{j-i}$, as desired.

Let $u \in V(C_0)$. Define a sequence $(C_i)_{i \in \N}$ such that $C_i$ is the component containing $\varphi^i(u)$. This sequence consists of pairwise distinct components by the previous claim. It is clear that $\varphi$ carries $C_i$ to $C_{i+1}$, so $C_i \le C_{i+1}$ for $i \in \N$, and we are done.
\end{proof}

Proposition \ref{prop1} implies, as an example, that the union of finite paths is self-embeddable, but the union of finite cycles with different lengths is not. Also, we obtain the following corollary.

\begin{corollary}\label{cor1}
A star forest is self-embeddable if and only if it is infinite.
\end{corollary}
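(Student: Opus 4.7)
The plan is to deduce both directions from Proposition \ref{prop1}, using the fact that a star $S_n$ embeds into $S_m$ if and only if $n \le m$ (with the convention that $n$ may be a cardinal).

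For the forward direction, suppose the star forest $G$ is self-embeddable. By Proposition \ref{prop1}, either some component $C$ is self-embeddable, or there is an infinite sequence of distinct components of $G$. In the first case, $C$ is a star, and since a finite star is non--self-embeddable (being a finite graph), $C$ must be an infinite star, so $G$ is infinite. In the second case, $G$ has infinitely many components, hence is infinite.

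For the backward direction, suppose $G$ is an infinite star forest. If $G$ has an infinite-star component $S_\infty$, then $S_\infty$ is self-embeddable (delete a leaf and relabel), so part (i) of Proposition \ref{prop1} applies. Otherwise, every component is a finite star $S_{n_i}$, and since $G$ is infinite, there must be infinitely many components. I then split into two subcases. If the sizes $\{n_i\}$ are unbounded, I choose a subsequence of components whose sizes strictly increase, yielding a strict ascending chain $S_{n_{i_1}} < S_{n_{i_2}} < \cdots$ of distinct components. If the sizes are bounded, pigeonhole produces some $n$ such that infinitely many components are isomorphic to $S_n$; these are distinct components forming the chain $S_n \le S_n \le \cdots$. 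Either way, part (ii) of Proposition \ref{prop1} applies, and $G$ is self-embeddable.

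The only point requiring care is that the second clause of Proposition \ref{prop1} demands only distinct components, not distinct isomorphism types, so the pigeonhole case is legitimate; once this is noted, there is no real obstacle.
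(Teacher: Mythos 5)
Your proof is correct and follows the route the paper intends: the corollary is stated as an immediate consequence of Proposition \ref{prop1}, and your case analysis (an infinite-star component versus infinitely many finite-star components, handled by an increasing subsequence or by pigeonhole) is exactly the derivation the paper leaves implicit. Your closing remark that clause (ii) of Proposition \ref{prop1} requires only distinct components, not distinct isomorphism types, correctly identifies the one point where care is needed.
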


For nonempty graphs $G$, a stronger property than self-embeddability is the property that $G \le G-e$ for all $e \in E(G)$. The ray and an infinite complete graph, for example, enjoy this stronger property. On the other hand, the disjoint union $\N \cup \Z$ is self-embeddable, but does not embed into $\N \cup (\Z-e)$, where $e$ is any edge of $\Z$. Thus $\N \cup \Z$ does not possess this stronger property.

\begin{proposition}\label{prop2}
If $G$ is a nonempty graph such that $G \le G-e$ for all $e \in E(G)$, then no $(G,H)$-minimal graph exists for any graph $H$.
\end{proposition}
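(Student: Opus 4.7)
The plan is to show that every graph $F$ with $F \to (G,H)$ admits a proper subgraph $F'$ with $F' \to (G,H)$, so no $F$ can be $(G,H)$-minimal. The idea is to locate a copy of $G$ inside $F$, remove one of its edges, and use the hypothesis $G \le G - e$ to repair any red copy of $G$ destroyed by this removal.

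First I would observe that $G \le F$: coloring every edge of $F$ red produces no blue $H$ (there are no blue edges), so by $F \to (G,H)$ it must produce a red copy of $G$, giving an embedding $\iota\colon G \to F$. Fix any $e_0 \in E(G)$ (which exists because $G$ is nonempty), and set $e = \iota(e_0) \in E(F)$. I claim that $F - e \to (G,H)$, which suffices since $F - e$ is a proper subgraph of $F$.

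To verify the claim, let $c$ be an arbitrary red-blue coloring of $F - e$, and extend it to a coloring $\tilde c$ of $F$ by declaring $\tilde c(e) = \text{red}$. Since $F \to (G,H)$, the coloring $\tilde c$ produces a red $G$ or a blue $H$. In the blue-$H$ case, the edge $e$ is red under $\tilde c$, so any blue copy of $H$ lies entirely in $F - e$ and remains blue under $c$. In the red-$G$ case, let $G' \subseteq F$ be such a red copy with isomorphism $\psi\colon G \to G'$. If $e \notin E(G')$, then $G'$ is already a red $G$ under $c$. Otherwise $e \in E(G')$, and setting $e' = \psi^{-1}(e) \in E(G)$, the hypothesis $G \le G - e'$ supplies an embedding $\varphi\colon G \to G - e'$; then $\psi \circ \varphi\colon G \to G' - e \subseteq F - e$ yields a copy of $G$ all of whose edges were red under $\tilde c$ and thus under $c$.

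The main obstacle is precisely the subcase $e \in E(G')$ of the red-$G$ branch: naively deleting $e$ would destroy the red copy of $G$ that witnesses $\tilde c$'s arrow property. The strong hypothesis $G \le G - e'$ for every $e' \in E(G)$ is what defuses this subcase, guaranteeing an alternative red copy of $G$ that avoids $e$. With $F - e$ a proper subgraph of $F$ for which $F - e \to (G,H)$, the graph $F$ cannot be $(G,H)$-minimal; since $F$ was arbitrary among graphs arrowing $(G,H)$, the pair $(G,H)$ admits no Ramsey-minimal graph.
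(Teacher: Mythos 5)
Your proof is correct and follows essentially the same route as the paper's: extend a coloring of $F-e$ to $F$ by making $e$ red, note that no blue $H$ can use $e$, and use the hypothesis $G \le G-e'$ to replace any red copy of $G$ through $e$ by one avoiding $e$. The only difference is that you first locate an embedded copy of $G$ in $F$ and delete one of its edges; this preliminary step is harmless but unnecessary, since your repair argument (and the paper's) works verbatim for an arbitrary edge $e \in E(F)$.
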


\begin{proof}
We will prove that for every graph $F$ such that $F \to (G,H)$, we have $F-e \to (G,H)$ for some $e \in E(F)$. This would show that $(G,H)$ admits no minimal graph.

Let $F$ be a graph, and let $e$ be any one of its edges. Set $F'=F-e$. Suppose that $F' \not\to (G,H)$---there exists a $(G,H)$-good coloring $c'$ of $F'$. We show that $F \not\to (G,H)$. Define a coloring $c$ on $F$ such that $c\res_{E(F')}=c'$ and $e$ is colored red. By this definition, no blue copy of $H$ is produced in $F$. We claim that $c$ does not produce a red copy of $G$ either. Suppose to the contrary that a red copy of $G$, say $\widehat{G}$, is produced in $F$. Since $\widehat{G} \le \widehat{G}-e$, we can choose a red copy of $G$ in $F$ that does not contain $e$; that is, there exists a red copy of $G$ in $F'$. This contradicts the $(G,H)$-goodness of $c'$. As a consequence, $c$ is a $(G,H)$-good coloring of $F$, and thus $F \not\to (G,H)$.
\end{proof}

We note that Proposition \ref{prop2} does not hold for self-embeddable graphs $G$ in general---see Example \ref{ex1}.

If $R$ is the Rado graph, then $R-e$ is also the Rado graph for every $e \in E(R)$ via \cite[Proposition 2(b)]{cameron2013}. As a result, the Rado graph satisfies the hypothesis of Proposition \ref{prop2}. Consequently, by \cite{erdos1963}, the following holds.

\begin{proposition}\label{prop3}
For $H$ a fixed graph, almost all countably infinite graphs $G$ produce a pair $(G,H)$ which admits no Ramsey-minimal graph.
\end{proposition}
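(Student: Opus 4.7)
The plan is to combine the two cited results (Cameron's observation on the Rado graph and the Erd\H{o}s--R\'{e}nyi theorem on the typical countably infinite graph) with Proposition \ref{prop2}, exactly as the paragraph preceding the statement already foreshadows. First, I would recall that the Rado graph $R$ satisfies $R-e \cong R$ for every $e \in E(R)$, by the cited \cite[Proposition 2(b)]{cameron2013}. In particular, the identity on the image gives an embedding $R \le R-e$, so $R$ satisfies the hypothesis of Proposition \ref{prop2}. Applying Proposition \ref{prop2} with $G=R$ and an arbitrary fixed graph $H$ immediately yields that $(R,H)$ admits no Ramsey-minimal graph.

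Next, I would invoke the Erd\H{o}s--R\'{e}nyi theorem from \cite{erdos1963}: with respect to the natural measure on countably infinite graphs (each potential edge present independently with probability $1/2$), almost every countably infinite graph is isomorphic to $R$. Since admitting a Ramsey-minimal graph is an isomorphism invariant of the pair, and $H$ is fixed, the set of $G$ for which $(G,H)$ admits no Ramsey-minimal graph contains all $G \cong R$, which is a measure-one set. This establishes the proposition.

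The main conceptual step has already been isolated as Proposition \ref{prop2}; what remains is essentially bookkeeping. The only point one has to be slightly careful about is the phrase ``almost all countably infinite graphs,'' which needs to be interpreted in the sense of \cite{erdos1963} (i.e. with respect to the $\operatorname{Bernoulli}(1/2)$ measure on the edges of a fixed countably infinite vertex set), so that the almost-sure isomorphism with $R$ is available. I do not foresee any genuine obstacle; the proof is just a direct combination of Proposition \ref{prop2} with the two cited facts.
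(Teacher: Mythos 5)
Your proposal is correct and follows exactly the argument the paper gives (in the paragraph preceding the statement): the Rado graph satisfies $R-e \cong R$ by the cited result of Cameron, hence meets the hypothesis of Proposition \ref{prop2}, and the Erd\H{o}s--R\'{e}nyi theorem transfers the conclusion to almost all countably infinite graphs. No further comment is needed.
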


\section{Graph unions}\label{sec3}

Before we focus on star forests proper, we provide a quick background on graph unions in general. Consider graphs $G$, $H_1$, and $H_2$; let $F_i \in \R(G,H_i)$ for $i \in \{1,2\}$. Possible candidates for a $(G,H_1 \cup H_2)$-minimal graph include $F_1$, $F_2$, and $F_1 \cup F_2$. 

Although $F_1 \cup F_2 \to (G,H_1 \cup H_2)$, it not necessarily true that $F_1 \cup F_2 \in \R(G,H_1 \cup H_2)$. Indeed, let us take $H_1=H_2=S_1$. For $G$ connected, we have $2G \in \R(G,2S_1)$ provided that $G \in \R(G,S_1)$. This was discussed in \cite{barrett2021} but also follows from Proposition \ref{prop4}. On the other hand, if $G$ is disconnected, we have $3\Z \in \R(2\Z,2S_1)$---not $4\Z$---even though $2\Z \in \R(2\Z,S_1)$.

\begin{proposition}\label{prop4}
Let $G$ and $H$ be nontrivial, connected graphs, and let $n \in \N$. If $F_i \in \R(G,H)$ for $1 \le i \le n$, then
\[\bigcup_{i=1}^n F_i \in \R(G,nH).\]
Consequently, the existence of a $(G,nH)$-minimal graph is assured provided that a $(G,H)$-minimal graph exists.
\end{proposition}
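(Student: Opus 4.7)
The plan is to prove both containments implicit in ``$F \in \R(G,nH)$'' separately, where $F := \bigcup_{i=1}^n F_i$, using the connectivity of $G$ and $H$ throughout so that every copy of either graph sits inside a single component of $F$.

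First I would establish the arrow $F \to (G,nH)$. Given any red-blue coloring of $F$, restrict it to each $F_i$; since $F_i \to (G,H)$, each $F_i$ contains either a red $G$ or a blue $H$. If some $F_i$ yields a red $G$, we are already done. Otherwise every $F_i$ yields a blue $H$, and because the $F_i$ are pairwise vertex-disjoint components of $F$ these $n$ blue copies of $H$ are automatically vertex-disjoint, assembling into a blue $nH$.

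The heart of the proof is minimality, and this is where I expect the only real work. Fix any edge $e$ of $F$, say $e \in E(F_j)$; I want to build a $(G,nH)$-good coloring of $F - e$. On $F_j - e$, use a $(G,H)$-good coloring, which exists by minimality of $F_j$. For each $i \ne j$, since $F_i$ is nontrivial (being minimal against a nontrivial pair) pick any edge $e_i \in E(F_i)$, take a $(G,H)$-good coloring of $F_i - e_i$ (available by minimality of $F_i$) and extend it by coloring $e_i$ blue. No component now contains a red $G$, so connectivity of $G$ rules out a red $G$ in $F - e$ altogether. The key observation is that every blue copy of $H$ occurring in $F_i$ under this extended coloring is forced to contain the edge $e_i$ (otherwise it would already live in $F_i - e_i$, contradicting goodness), so all blue copies of $H$ within $F_i$ share the vertices of $e_i$ and hence no two of them can appear together in a vertex-disjoint family. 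Since $G$ and $H$ are connected, a blue $nH$ in $F-e$ would decompose as vertex-disjoint blue $H$'s inside the components, contributing zero from $F_j - e$ and at most one from each of the $n-1$ other $F_i$, for a total of at most $n-1 < n$; therefore no blue $nH$ exists. Thus $F - e \not\to (G,nH)$, and since $e$ was arbitrary, $F$ is $(G,nH)$-minimal.

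The consequence is immediate: if any $F_0 \in \R(G,H)$ exists, take $F_1 = \cdots = F_n = F_0$ and apply the main statement to conclude $nF_0 \in \R(G,nH)$. The only subtle point worth flagging in the write-up is the appeal to connectivity of both $G$ and $H$ when localizing copies to a single component, and the ``shared edge $e_i$'' trick that keeps the vertex-disjoint blue $H$-count strictly below $n$; everything else is bookkeeping.
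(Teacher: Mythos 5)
Your proof is correct and takes essentially the same route as the paper: where the paper colors each $F_i$ ($i \neq k$) by a $(G,2H)$-good coloring (which exists because $F_i \to (G,2H)$ would contradict $(G,H)$-minimality), you explicitly build such a coloring by deleting an edge $e_i$, using a $(G,H)$-good coloring of $F_i - e_i$, and recoloring $e_i$ blue, so that every blue $H$ in $F_i$ must use $e_i$. Both arguments then conclude identically by counting at most one blue $H$ from each component other than the one containing the deleted edge, giving at most $n-1 < n$ disjoint blue copies.
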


\begin{proof}
The arrowing part is obvious, so we only show the minimality of $\bigcup_{i=1}^n F_i$. It is clear that $F_i \not\to (G,2H)$, since otherwise we would have $F_i \notin \R(G,H)$. Let $e$ be an edge of $F_k$ for some $1 \le k \le n$. Color $F_k-e$ by a $(G,H)$-good coloring and $F_i$, for $i \neq k$, by a $(G,2H)$-good coloring. This coloring on $\left(\bigcup_{i=1}^n F_i\right)-e$ is easily shown to be $(G,nH)$-good from the connectivity of $G$ and $H$. Since $e$ is arbitrary, the proposition is proved.
\end{proof}

In contrast to Proposition \ref{prop4}, the following proposition considers $F_i$ as a candidate for being in $\R(G,H_1 \cup H_2)$. A sufficient condition is provided for a $(G,H_1)$-minimal graph to be $(G,H_1 \cup H_2)$-minimal.

\begin{proposition}\label{prop5}
Let $G$, $H_1$, and $H_2$ be graphs, and let $F \in \R(G,H_1)$. If $F-V(\widehat{H}_1) \to (G,H_2)$ for every $\widehat{H}_1$ a copy of $H_1$ in $F$, then $F \in \R(G,H_1 \cup H_2)$.
\end{proposition}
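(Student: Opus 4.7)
My plan is to verify the two obligations defining $(G, H_1 \cup H_2)$-minimality of $F$ separately: first the arrowing $F \to (G, H_1 \cup H_2)$, and then the failure of arrowing for every proper subgraph $F' \subsetneq F$. For the second obligation it is enough, by monotonicity of $\to$, to exhibit a $(G, H_1 \cup H_2)$-good coloring of $F - e$ for each $e \in E(F)$, since any proper subgraph $F'$ embeds into some $F - e$ (isolated vertices aside).

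For the arrowing, I would take an arbitrary red-blue coloring $c$ of $F$ and use the hypothesis $F \to (G, H_1)$: either $c$ produces a red copy of $G$ and we are done, or it produces a blue copy $\widehat{H}_1$ of $H_1$ in $F$. In the latter case, I would restrict $c$ to $F - V(\widehat{H}_1)$. Since by assumption $F - V(\widehat{H}_1) \to (G, H_2)$, this restricted coloring forces either a red copy of $G$ (done) or a blue copy $\widehat{H}_2$ of $H_2$ in $F - V(\widehat{H}_1)$. By construction $\widehat{H}_2$ is vertex-disjoint from $\widehat{H}_1$, so $\widehat{H}_1 \cup \widehat{H}_2$ is a blue copy of $H_1 \cup H_2$ in $F$.

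For the minimality, fix $e \in E(F)$. Because $F \in \R(G, H_1)$, the subgraph $F - e$ admits a $(G, H_1)$-good coloring $c'$, i.e.\ one producing neither a red $G$ nor a blue $H_1$. I would simply observe that the same $c'$ is automatically $(G, H_1 \cup H_2)$-good: any blue copy of $H_1 \cup H_2$ would contain a blue copy of $H_1$ as a subgraph, contradicting the choice of $c'$. Hence $F - e \not\to (G, H_1 \cup H_2)$, as required.

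I do not expect any substantive obstacle; the entire proof is a careful unfolding of definitions. The only point that requires a moment's care is why the hypothesis is formulated in terms of $F - V(\widehat{H}_1)$ rather than $F$ itself---this is precisely what guarantees that the extracted blue copy of $H_2$ is vertex-disjoint from $\widehat{H}_1$, so that their union is a genuine blue copy of the disjoint union $H_1 \cup H_2$.
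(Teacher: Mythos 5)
Your proof is correct and follows essentially the same route as the paper: the arrowing argument (extract a blue $\widehat{H}_1$, pass to $F-V(\widehat{H}_1)$, extract a disjoint blue $\widehat{H}_2$) is identical, and your minimality step merely spells out what the paper dismisses with ``follows immediately from the $(G,H_1)$-minimality of $F$,'' namely that a $(G,H_1)$-good coloring is automatically $(G,H_1\cup H_2)$-good. The only cosmetic remark is that your reduction to edge-deleted subgraphs $F-e$ is an unnecessary detour---the same observation applies directly to an arbitrary proper subgraph $F'$, since $F' \not\to (G,H_1)$ already yields the required good coloring.
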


\begin{proof}
We first prove that $F \to (G,H_1 \cup H_2)$. Suppose $c$ is a coloring on $F$ that produces no red copy of $G$. It follows from $F \to (G,H_1)$ that $c$ produces a blue copy of $H_1$, say $\widehat{H}_1$, in $F$. Let $F'=F-V(\widehat{H}_1)$. Since $F' \to (G,H_2)$ and $F'$ contains no red copy of $G$, there exists a blue copy of $H_2$, say $\widehat{H}_2$, in $F'$. We observe that $\widehat{H}_1$ and $\widehat{H}_2$ are disjoint, so $c$ produces a blue copy of $H_1 \cup H_2$. Hence $F \to (G,H_1 \cup H_2)$. Its minimality follows immediately from the $(G,H_1)$-minimality of $F$.
\end{proof}

\begin{example}\label{ex1}
Let
\begin{align*} 
&G=2S_1, \\
&H_1=\Z, \\
&H_2=\N,\text{ and} \\
&F=2\Z.
\end{align*}
The graph $2\Z$ is $(2S_1,\Z)$-minimal, and $\Z \to (2S_1,\N)$, so we can conclude by Proposition \ref{prop5} that $2\Z \in \R(2S_1,\Z \cup \N)$. This serves as an example of a pair $(G,H)$ involving a self-embeddable graph that admits a minimal graph. We note, however, that no $(S_1,\Z \cup \N)$-minimal graph exists since $\Z \cup \N$ is self-embeddable. Thus it is possible that a $(2G,H)$-minimal graph exists even though no $(G,H)$-minimal graph exists.
\end{example}

\section{Proof of Theorem \ref{mainthm1}}\label{sec4}
We fix star forests $G$ and $H$ such that at least one of them contains a star component on infinitely many vertices. We prove in this section that $(G,H)$ admits no Ramsey-minimal graph.

Suppose that $F \to (G,H)$. Since one of $G$ and $H$ contains a vertex of infinite degree, there exists a vertex $v$ of infinite degree in $F$. We choose an arbitrary edge $e$ at $v$. We prove that $F' \to (G,H)$, where $F'=F-e$. Toward a contradiction, suppose that $F'$ admits a $(G,H)$-good coloring $c'$. Since $\deg(v)$ is infinite, there are two possible cases: $v$ is incident to infinitely many red edges or infinitely many blue edges under the coloring $c'$.

Suppose that $v$ is incident to infinitely many red edges. Define a coloring $c$ on $F$ such that $c\res_{E(F')}=c'$ and $e$ is colored red. This coloring produces no blue copy of $H$, so by $F \to (G,H)$ it produces a red copy of $G$, say $\widehat{G}$, in $F$. There exists a star component $S$ of $\widehat{G}$ that contains $e$ since otherwise, $\widehat{G} \subseteq F'$, which contradicts the $(G,H)$-goodness of $c'$.

If $S$ is infinite, then $F'$ clearly contains a red copy of $G$ by removing $e$ from $\widehat{G}$. On the other hand, let us suppose that $S$ has $n$ vertices. We can pick a red star $S'$ on $n$ vertices that is centered on $v$ but does not contain $e$, since $v$ is incident to infinitely many red edges. The graph $F'$ can then be shown to contain a red copy of $G$ by exchanging $S$ from $\widehat{G}$ for $S'$. In both cases, we obtain a contradiction.

The case when $v$ is incident to infinitely many blue edges can be handled similarly, so our proof of Theorem \ref{mainthm1} is complete.

\section{Subdivision graphs vs.\ star forests}\label{sec5}

\subsection{Bipartite graphs}\label{subsec5.1}

Recall that a graph is \emph{bipartite} if its vertex set can be partitioned into two \emph{parts} such that each part is an independent set. Let $K$ be a bipartite graph with bipartition $\{A,B\}$ such that $\deg(u) \neq \infty$ for all $u \in A$. Before we work on subdivision graphs $S(G)$, we construct for $n \in \N$, a graph $\Gamma(K,A,n)$ such that $\Gamma(K,A,n) \to (K,S_n)$.

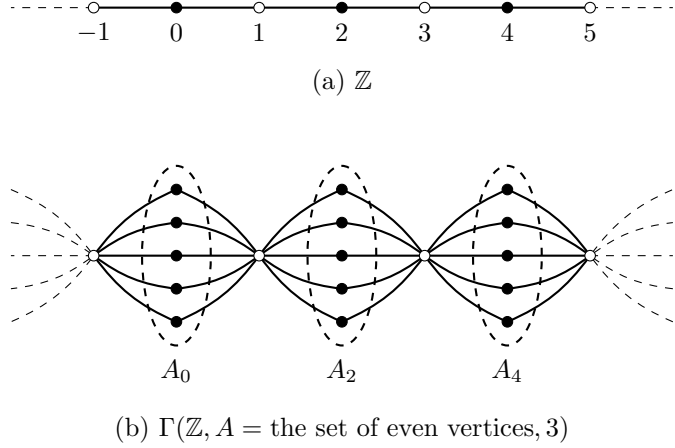
\begin{figure}
    \centering
    \begin{tikzpicture}[x=11mm,y=11mm]
        \draw[thick] (0,0) -- (6,0);
        \draw[dashed] (0,0) -- (-1,0);
        \draw[dashed] (6,0) -- (7,0);
        
        \draw[fill=white] (0,0) circle (2pt);
        \draw[fill=black] (1,0) circle (2pt);
        \draw[fill=white] (2,0) circle (2pt);
        \draw[fill=black] (3,0) circle (2pt);
        \draw[fill=white] (4,0) circle (2pt);
        \draw[fill=black] (5,0) circle (2pt);
        \draw[fill=white] (6,0) circle (2pt);
        
        \node at (0,-0.3) {$-1$};
        \node at (1,-0.3) {$0$};
        \node at (2,-0.3) {$1$};
        \node at (3,-0.3) {$2$};
        \node at (4,-0.3) {$3$};
        \node at (5,-0.3) {$4$};
        \node at (6,-0.3) {$5$};
        
        \node at (3,-0.9) {(a) $\Z$};
        
        \draw[thick] (0,-3) -- (6,-3);
        \draw[thick] (0,-3) to [bend left=15] (1,-2.6) to [bend left=15] (2,-3) to [bend left=15] (3,-2.6) to [bend left=15] (4,-3) to [bend left=15] (5,-2.6) to [bend left=15] (6,-3);
        \draw[thick] (0,-3) to [bend right=15] (1,-3.4) to [bend right=15] (2,-3) to [bend right=15] (3,-3.4) to [bend right=15] (4,-3) to [bend right=15] (5,-3.4) to [bend right=15] (6,-3);
        \draw[thick] (0,-3) to [bend left=15] (1,-2.2) to [bend left=15] (2,-3) to [bend left=15] (3,-2.2) to [bend left=15] (4,-3) to [bend left=15] (5,-2.2) to [bend left=15] (6,-3);
        \draw[thick] (0,-3) to [bend right=15] (1,-3.8) to [bend right=15] (2,-3) to [bend right=15] (3,-3.8) to [bend right=15] (4,-3) to [bend right=15] (5,-3.8) to [bend right=15] (6,-3);
        \draw[dashed] (0,-3) -- (-1,-3);
        \draw[dashed] (6,-3) -- (7,-3);
        \draw[dashed] (-1,-2.6) to [bend left=15] (0,-3) to [bend left=15] (-1,-3.4);
        \draw[dashed] (-1,-2.2) to [bend left=15] (0,-3) to [bend left=15] (-1,-3.8);
        \draw[dashed] (7,-3.4) to [bend left=15] (6,-3) to [bend left=15] (7,-2.6);
        \draw[dashed] (7,-3.8) to [bend left=15] (6,-3) to [bend left=15] (7,-2.2);
        \draw[thick,dashed] (1,-3) ellipse (13pt and 34pt);
        \draw[thick,dashed] (3,-3) ellipse (13pt and 34pt);
        \draw[thick,dashed] (5,-3) ellipse (13pt and 34pt);
        
        \draw[fill=black] (1,-2.6) circle (2pt);
        \draw[fill=black] (3,-2.6) circle (2pt);
        \draw[fill=black] (5,-2.6) circle (2pt);
        \draw[fill=black] (1,-3.4) circle (2pt);
        \draw[fill=black] (3,-3.4) circle (2pt);
        \draw[fill=black] (5,-3.4) circle (2pt);
        \draw[fill=black] (1,-2.2) circle (2pt);
        \draw[fill=black] (3,-2.2) circle (2pt);
        \draw[fill=black] (5,-2.2) circle (2pt);
        \draw[fill=black] (1,-3.8) circle (2pt);
        \draw[fill=black] (3,-3.8) circle (2pt);
        \draw[fill=black] (5,-3.8) circle (2pt);
        \draw[fill=white] (0,-3) circle (2pt);
        \draw[fill=black] (1,-3) circle (2pt);
        \draw[fill=white] (2,-3) circle (2pt);
        \draw[fill=black] (3,-3) circle (2pt);
        \draw[fill=white] (4,-3) circle (2pt);
        \draw[fill=black] (5,-3) circle (2pt);
        \draw[fill=white] (6,-3) circle (2pt);
        
        \node at (1,-4.4) {$A_0$};
        \node at (3,-4.4) {$A_2$};
        \node at (5,-4.4) {$A_4$};
        \node at (3,-5.1) {(b) $\Gamma(\Z,A=\text{the set of even vertices},3)$};
        
    \end{tikzpicture}
    
    \caption{The construction of $\Gamma(K,A,n)$ for $K=\Z$ and $n=3$.}
    \label{fig2}
\end{figure}

We define $\Gamma(K,A,n)$ by adding additional vertices and edges to $K$. For every $u \in A$, we add vertices $u_1,...,u_{m(n-1)}$---each not already in $V(K)$---to $K$, where $m=\deg(u)$. We then insert an edge between each $u_i$ and a vertex $v$ of $K$ if $uv$ exists in $K$. We denote the resulting graph by $\Gamma(K,A,n)$. Also, for each $u \in A$, we define $A_u$ as the set $\{u,u_1,...,u_{m(n-1)}\}$. As a result, $\Gamma(K,A,n)$ admits a bipartition $\{\bigcup_{u \in A} A_u,B\}$. Figure \ref{fig2} shows the result of this construction when $K=\Z$ and $n=3$.

There is a natural projection $\pi\colon \Gamma(K,A,n) \to K$ that is also a homomorphism. It is defined as
\begin{equation}\label{eq1}\pi(v)=\begin{cases}
u, & v \in A_u \text{ for some } u \in A, \\ v, & v \in B.
\end{cases}\end{equation}

\begin{proposition}\label{prop6}
Let $K$ be a bipartite graph with bipartition $\{A,B\}$ such that $\deg(u) \neq \infty$ for all $u \in A$. For $n \in \N$, we have $\Gamma(K,A,n) \to (K,S_n)$. Consequently for $k \in \N$,
\[\bigcup_{i=1}^k \Gamma(K,A,n_i) \to \left(K,\bigcup_{i=1}^k S_{n_i}\right),\]
where $n_1,\dots,n_k \in \N$.
\end{proposition}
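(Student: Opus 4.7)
The strategy is to prove the first arrowing $\Gamma(K,A,n) \to (K,S_n)$ by a pigeonhole argument on each fibre $A_u$, and then derive the union arrowing immediately from the disjointness of components.

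For the first arrowing, I would fix a red-blue coloring of $\Gamma(K,A,n)$ that produces no blue $S_n$. The absence of a blue $S_n$ is equivalent to the local degree condition that every vertex of $\Gamma(K,A,n)$ is incident to at most $n-1$ blue edges. The aim is then to build a red embedding $\varphi\colon K \to \Gamma(K,A,n)$ by setting $\varphi$ to be the identity on $B$ and by choosing, for each $u \in A$, a suitable representative $w_u \in A_u$ with $\varphi(u)=w_u$.

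The core counting step goes as follows. Fix $u \in A$ and set $m=\deg(u)$, which is finite by hypothesis; write $N_K(u)=\{v_1,\dots,v_m\} \subseteq B$. By the construction of $\Gamma(K,A,n)$, each of the $m(n-1)+1$ vertices of $A_u$ is adjacent to every $v_j$. Call $w \in A_u$ \emph{bad} if some edge $wv_j$ is blue. Since each $v_j$ is incident to at most $n-1$ blue edges in total, at most $n-1$ vertices of $A_u$ are blue-adjacent to $v_j$; summing over $j$ gives at most $m(n-1)$ bad vertices. As $|A_u|=m(n-1)+1$, there is a non-bad $w_u \in A_u$, all of whose edges to $N_K(u)$ are red. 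Setting $\varphi(u)=w_u$, the map $\varphi$ is injective because the sets $A_u$ are pairwise disjoint and disjoint from $B$, and by construction every edge $uv \in E(K)$ maps to a red edge $w_u v$ of $\Gamma(K,A,n)$. This produces the desired red copy of $K$.

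For the consequence, suppose a red-blue coloring of the disjoint union $\bigcup_{i=1}^k \Gamma(K,A,n_i)$ admits no red copy of $K$. Its restriction to each component $\Gamma(K,A,n_i)$ still produces no red $K$, so by the first part this restriction must contain a blue $S_{n_i}$. Since the $k$ components are pairwise vertex-disjoint, the $k$ blue stars together form a blue $\bigcup_{i=1}^k S_{n_i}$, as required. The only delicate point of the argument is the pigeonhole count, whose success depends exactly on the choice $m(n-1)+1$ built into the construction of $A_u$; everything else is routine bookkeeping about disjointness and edge preservation.
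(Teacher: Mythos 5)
Your proposal is correct and follows essentially the same route as the paper: a pigeonhole count over each fibre $A_u$ (yours phrased as ``at most $m(n-1)$ bad vertices,'' the paper's as ``some vertex of $N$ would receive at least $n$ blue edges''---the same count in contrapositive form), followed by the same red embedding fixing $B$ pointwise, and the same disjoint-union argument for the consequence.
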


\begin{proof}
Suppose that $c$ is a coloring on $\Gamma(K,A,n)$ that produces no blue copy of $S_n$. We prove that $c$ produces a red copy of $K$.

We claim that for all $u \in A$, there exists $v_u \in A_u$ such that $v_u$ is incident to red edges only. By construction, the vertices in $A_u$ share the same neighborhood $N$ of $m$ vertices, and $|A_u|=m(n-1)+1$. If every vertex in $A_u$ is incident to at least one blue edge, then the vertices in $N$ in total are incident to at least $m(n-1)+1$ blue edges. Since $|N|=m$, there exists a vertex in $N$ that is incident to at least $n$ blue edges by the Pigeonhole Principle. This is impossible since $\Gamma(K,A,n)$ does not contain a blue copy of $S_n$. Therefore, $A_u$ must contain a vertex that is incident to only red edges.

Hence we can define an embedding $\varphi\colon K \to \Gamma(K,A,n)$ as
\[\varphi(u)=\begin{cases}
v_u, & u \in A, \\ u, & u \in B.
\end{cases}\]
The graph image of $\varphi$ is a red copy of $K$ in $\Gamma(K,A,n)$, as desired.
\end{proof}

The graph $\Gamma(K,A,n)$ is not necessarily $(K,S_n)$-minimal in general. For example, let us take $K=S_k$ and $A$ as the set of leaf vertices of $S_k$. We have $\Gamma(S_k,A,n)=S_{kn}$, which is not $(S_k,S_n)$-minimal for $k,n \ge 2$ since $S_{k+n-1} \in \R(S_k,S_n)$. However, we potentially have $\Gamma(K,A,n) \in \R(K,S_n)$ when $K=S(G)$ for some graph $G$ as stated in Theorem \ref{mainthm2}.

\subsection{Proof of Theorem \ref{mainthm2}}\label{subsec5.2}

Fix a connected, leafless, non--self-embeddable graph $G$. Building upon Subsection \ref{subsec5.1}, we prove that for $n_1,\dots,n_k \in \N$, we have
\begin{equation}\label{eq2}\bigcup_{i=1}^k \Gamma(S(G),A,n_i) \in \R\left(S(G),\bigcup_{i=1}^k S_{n_i}\right),\end{equation}
where $A$ is taken as the set of vertices of $S(G)$ that subdivide the edges of $G$. We note that $\deg(u)=2$ for all $u \in A$. First, we show that the three properties of $G$ transfer to $S(G)$, and that $S(G)$ is \emph{$C_4$-free}---it contains no $4$-cycles. The following lemma can be verified using elementary means.

\begin{lemma}\label{lem1}
Let $G$ and $H$ be connected, bipartite graph with bipartition $\{A,B\}$ and $\{C,D\}$, respectively. For any isomorphism $\varphi\colon G \to H$, either $\varphi(A)=C$ and $\varphi(B)=D$, or $\varphi(A)=D$ and $\varphi(B)=C$.
\end{lemma}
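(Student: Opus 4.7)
The plan is to exploit the fact that the bipartition of a connected bipartite graph is unique up to swapping the two parts; pulling back $\{C,D\}$ along $\varphi$ produces another bipartition of $G$, and uniqueness then forces it to agree with $\{A,B\}$ in one of the two prescribed ways.

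First I would fix any vertex $v \in A$. Since $\varphi$ is a bijection, $\varphi(v)$ lies in exactly one of $C$ or $D$; by the symmetry of the two conclusions of the lemma, it suffices to treat the case $\varphi(v) \in C$ and to show that, in this case, $\varphi(A) = C$ and $\varphi(B) = D$.

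For an arbitrary $u \in V(G)$, connectedness of $G$ provides a $v$-$u$ path $P$. Because $G$ is bipartite, the parity of the length of any $v$-$u$ walk is fixed: otherwise, concatenating two such walks of different parities would yield a closed walk of odd length, which must contain an odd cycle, contradicting bipartiteness. Moreover, this parity is even precisely when $u \in A$ and odd precisely when $u \in B$. The image $\varphi(P)$ is a path in $H$ of the same length from $\varphi(v) \in C$ to $\varphi(u)$, and the analogous parity statement in $H$ then places $\varphi(u)$ in $C$ when the length is even and in $D$ when it is odd. This yields $\varphi(A) \subseteq C$ and $\varphi(B) \subseteq D$, and bijectivity of $\varphi$ upgrades both inclusions to equalities.

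The main (and quite mild) obstacle is simply packaging the parity argument cleanly: verifying that distance parity from a fixed vertex is a well-defined two-valued invariant on a connected bipartite graph, and that it is faithfully transported by the isomorphism. Once this is in hand, the two cases in the statement correspond exactly to the two possible locations of $\varphi(v)$, and no further case analysis is needed.
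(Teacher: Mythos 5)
Your proof is correct: the distance-parity argument is the standard way to establish that the bipartition of a connected bipartite graph is unique up to swapping the parts, and transporting it through the isomorphism gives exactly the two cases in the statement. The paper itself omits the proof entirely (it only remarks that the lemma "can be verified using elementary means"), and your argument is precisely the elementary verification being alluded to, so there is nothing to compare beyond noting that you have supplied the details the authors left out.
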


\begin{proposition}\label{prop7}
If $G$ is a connected, leafless, non--self-embeddable graph, then $S(G)$ is also a connected, leafless, non--self-embeddable graph. In addition, $S(G)$ is $C_4$-free.
\end{proposition}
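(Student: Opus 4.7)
The plan is to verify the three easy structural properties directly from the definition of $S(G)$, and then establish non-self-embeddability by applying Lemma \ref{lem1} to reduce a hypothetical proper self-embedding of $S(G)$ to a proper self-embedding of $G$ via an ``unsubdivision'' argument. Connectivity is immediate: any $u$--$v$ walk in $G$ lifts to a $u$--$v$ walk in $S(G)$ by inserting the subdivision vertices between consecutive steps. Leaflessness is likewise immediate, since for $u \in V(G)$ we have $\deg_{S(G)}(u) = \deg_G(u) \ge 2$, while every subdivision vertex has degree exactly $2$. For $C_4$-freeness, note that $S(G)$ is bipartite with parts $V(G)$ and the set $A$ of subdivision vertices, so any $4$-cycle in $S(G)$ alternates between these parts, producing two distinct vertices of $A$ adjacent to the same pair of vertices of $V(G)$---contradicting the fact that each edge of $G$ determines a unique subdivision vertex.

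For non-self-embeddability, suppose toward a contradiction that $\varphi \colon S(G) \to S(G)$ is a proper self-embedding with image $S(G)'$. Since $S(G)' \cong S(G)$ is connected bipartite, its unique bipartition is inherited from $S(G)$ as $\{V(G) \cap V(S(G)'),\, A \cap V(S(G)')\}$, so Lemma \ref{lem1} leaves two cases: either $\varphi(V(G)) \subseteq V(G)$ and $\varphi(A) \subseteq A$ (\emph{part-preserving}), or $\varphi(V(G)) \subseteq A$ and $\varphi(A) \subseteq V(G)$ (\emph{part-swapping}).

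In the part-preserving case, the restriction $\psi := \varphi\res_{V(G)}$ is an embedding $G \to G$: for any $uv \in E(G)$, the vertex $\varphi(x_{uv}) \in A$ is adjacent in $S(G)$ to both $\varphi(u)$ and $\varphi(v) \in V(G)$, and by $C_4$-freeness the subdivision vertex adjacent to a given pair of vertices of $V(G)$ is unique, so $\varphi(x_{uv}) = x_{\varphi(u)\varphi(v)}$ and in particular $\varphi(u)\varphi(v) \in E(G)$. I then plan to show $\psi$ is \emph{proper}: if $\psi$ were surjective on $V(G)$ and every edge of $G$ lay in its image, the identification $\varphi(x_{uv}) = x_{\varphi(u)\varphi(v)}$ would force $\varphi(A) = A$ and would place every edge $a\,x_{ab}$ of $S(G)$ into $E(S(G)')$, contradicting properness of $\varphi$. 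Hence $\psi$ is a proper self-embedding of $G$, contradicting the hypothesis on $G$.

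In the part-swapping case, each $u \in V(G)$ satisfies $\deg_G(u) = \deg_{S(G)}(u) = \deg_{S(G)'}(\varphi(u)) \le \deg_{S(G)}(\varphi(u)) = 2$ because $\varphi(u) \in A$; combined with leaflessness, this forces $G$ to be $2$-regular, and being connected, $G$ is either a finite cycle $C_n$ or the double ray $\Z$. But then $S(G)$ equals $C_{2n}$ or $\Z$, both of which are non-self-embeddable, again contradicting the existence of $\varphi$. The main obstacle is the part-preserving case: one must carefully track how properness of $\varphi$ propagates to an actual defect of $\psi$---either a missing vertex or a missing edge of $G$---and the uniqueness of subdivision vertices (the $C_4$-freeness just established) is what tightly identifies $\varphi(x_{uv})$ with $x_{\varphi(u)\varphi(v)}$ and makes this propagation work.
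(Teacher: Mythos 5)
Your proof is correct, and it follows the paper's overall skeleton (Lemma \ref{lem1}, the two cases, reduction to a self-embedding of $G$), but both cases are executed differently. In the part-preserving case, the paper transfers properness by fixing a specific edge $uv$ of $S(G)$ missing from the image of $\varphi$ (with $u$ the subdivider of an edge $vw$ of $G$) and showing directly that $vw$ cannot lie in the image of the restricted map, since otherwise $\{v,u,w,\varphi(c)\}$ would induce a $C_4$; you instead argue by contraposition---if the induced map on $G$ omitted nothing, then the identification $\varphi(x_{uv})=x_{\varphi(u)\varphi(v)}$ would force $\varphi$ to omit nothing---which handles missing vertices and missing edges uniformly and avoids having to locate a specific defect. (Minor quibble: the uniqueness of the subdivision vertex adjacent to a given pair already follows from subdivision vertices having exactly two neighbours, and your phrasing ``$\varphi(x_{uv})=x_{\varphi(u)\varphi(v)}$'' presupposes that $\varphi(u)\varphi(v)\in E(G)$, which is really the conclusion; the argument is trivially repaired by saying that $\varphi(x_{uv})=x_{ab}$ for some edge $ab$ whose endpoints must then be $\varphi(u)$ and $\varphi(v)$.) In the part-swapping case the divergence is more substantial: the paper simply passes to $\varphi^2$, which is again a proper self-embedding and is part-preserving, and reinvokes Case 1; you instead use the degree bound $\deg_G(u)\le\deg_{S(G)}(\varphi(u))=2$ to force $G$ to be $2$-regular, hence a finite cycle or the double ray, whose subdivisions are visibly non--self-embeddable. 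Your route works but leans on the classification of connected $2$-regular graphs and on the non--self-embeddability of $\Z$; the paper's squaring trick is shorter and requires no structural information about $G$.
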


\begin{proof}
The first two properties obviously transfer, and $S(G)$ is $C_4$-free since $G$ contains no multiple edges. We now prove that $G$ is self-embeddable given that $S(G)$ is self-embeddable.

Suppose that $\varphi$ is a self-embedding of $S(G)$. Let $A$ be the set of vertices of $S(G)$ that subdivide the edges of $G$, and let $B=V(G)$. Since $S(G)$ is connected and bipartite with bipartition $\{A,B\}$, there are by Lemma \ref{lem1} two cases to consider.

\textbf{Case 1:} $\varphi(A) \subseteq A$ and $\varphi(B) \subseteq B$. We claim that $\varphi$, restricted to $V(G)$, gives rise to a self-embedding $\widehat{\varphi}$ of $G$. It is straightforward to show that $\widehat{\varphi}$ is an embedding, so we only prove that there is an edge of $G$ not in the image of $\widehat{\varphi}$. Suppose that $uv$, where $u \in A$ and $v \in B$, is an edge of $S(G)$ not in the image of $\varphi$, and suppose that $u$ subdivides an edge $vw$ of $G$.

We prove that $vw$ is not in the image of $\widehat{\varphi}$. Suppose toward a contradiction that $\widehat{\varphi}(a)=v$ and $\widehat{\varphi}(b)=w$ for two adjacent vertices $a,b \in V(G)$. Let $c$ be the vertex that subdivides $ab$. It is apparent that $\{\varphi(c),v\}$ and $\{\varphi(c),w\}$ are edges of $S(G)$. Also, we cannot have $\varphi(c)=u$ since $uv$ is not in the image of $\varphi$. But then the vertices in the set $\{v,u,w,\varphi(c)\}$ induce a $4$-cycle on $S(G)$, which contradicts the fact that $S(G)$ is $C_4$-free.

\textbf{Case 2:} $\varphi(A) \subseteq B$ and $\varphi(B) \subseteq A$. The map $\varphi^2$ is a self-embedding of $S(G)$ that carries $A$ into $A$, and $B$ into $B$. So by appealing to Case 1, we can obtain a self-embedding of $G$.
\end{proof}

Armed with Proposition \ref{prop7}, we are ready to prove Theorem \ref{mainthm2}. But first, let us provide a straightforward application of the membership statement of (\ref{eq2}) that we will prove later.

\begin{example}\label{ex2}
Choose $G=\Z$ and $H=S_3$. Since $\Z$ is connected, leafless, and non--self-embeddable, and $S(\Z)=\Z$, the graph of Figure \ref{fig2}(b) is $(\Z,S_3)$-minimal by (\ref{eq2}).
\end{example}

\begin{proof}[Proof of Theorem \ref{mainthm2}]
First, suppose
\[H=\bigcup_{i=1}^k S_{n_i}, \text{ where } 1 \le n_1 \le \dots \le n_k.\]
Let $A$ be the set of vertices of $S(G)$ that subdivide the edges of $G$ so that $\deg(u)=2$ for all $u \in A$. Define $\Gamma_i=\Gamma(S(G),A,n_i)$, and let $\Gamma=\bigcup_{i=1}^k \Gamma_i$. Denote the corresponding set to $A_u$ that belongs to $\Gamma_i$ by $A_{u,i}$. We have $|A_{u,i}|=2n_i-1$. If $B_i=V(\Gamma_i) \backslash \bigcup_{u \in A} A_{u,i}$, then $\Gamma_i$ admits a bipartition $\{\bigcup_{u \in A} A_{u,i},B_i\}$.

We prove for each $e \in E(\Gamma)$ that there is a $(S(G),H)$-good coloring of $\Gamma-e$. This, along with Proposition \ref{prop6}, would show that $\Gamma \in \R(S(G),H)$.

\begin{lemma}\label{lem2}
For every $e \in E(\Gamma)$, there exists a coloring $c$ on $\Gamma-e$ such that both of the following statements hold:
\begin{enumerate}[\normalfont(i)]
    \item The coloring $c$ produces no blue copy of $H$.
    \item There exists $u \in A$ such that for $1 \le i \le k$, every vertex in $A_{u,i}$ is incident to exactly one red edge.
\end{enumerate}
\end{lemma}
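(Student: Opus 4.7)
My plan is to construct an explicit coloring $c$ on $\Gamma-e$ satisfying (i) and (ii) for each edge $e\in E(\Gamma)$. Since $\Gamma=\bigcup_{i=1}^k\Gamma_i$ is a disjoint union of bipartite graphs, $e$ lies in a unique component $\Gamma_j$ and joins some vertex $y\in A_{u^*,j}$ to some vertex $v^*\in B_j$, where $u^*\in A$ subdivides an edge $v^*w^*$ of $G$. I will take $u=u^*$ as the witness for (ii); removing $e$ will let me reduce the blue count at $\Gamma_j$ by one.

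The coloring is defined by three rules, grouped by how $n_i$ compares with $n_j$. In $\Gamma_j-e$: color $y$'s unique remaining edge (to $w^*$) red, and give each of the other $2n_j-2$ vertices of $A_{u,j}$ one red and one blue edge, arranged so that exactly $n_j-1$ blue edges go to $v^*$ and $n_j-1$ go to $w^*$. For $i\ne j$ with $n_i\le n_j$ (the \emph{balanced} case): give each of the $2n_i-1$ vertices of $A_{u,i}$ one red and one blue edge, so that $n_i$ blue edges go to $w^{(i)}$ and $n_i-1$ to $v^{(i)}$. For $i\ne j$ with $n_i>n_j$ (the \emph{imbalanced} case): send every blue edge of $A_{u,i}$ to $v^{(i)}$, giving $v^{(i)}$ capacity $2n_i-1$ and $w^{(i)}$ capacity $0$. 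All edges of $\Gamma-e$ not incident to any $A_{u,i}$ are colored red. Condition~(ii) is then immediate by construction.

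For (i), the blue subgraph has centers only at the two $S(G)$-neighbors of $u$ in each $\Gamma_i$, with capacities $n_j-1$ at each of $v^*,w^*$ in $\Gamma_j$; $n_i$ at $w^{(i)}$ and $n_i-1$ at $v^{(i)}$ in balanced $\Gamma_i$; and $2n_i-1$ at $v^{(i)}$ (with $0$ at $w^{(i)}$) in imbalanced $\Gamma_i$. I plan to verify (i) via a matching argument: embedding $H=\bigsqcup_{\ell=1}^k S_{n_\ell}$ requires placing each $S_{n_\ell}$ at a distinct blue center of capacity at least $n_\ell$ with vertex-disjoint leaves. The crucial observations are that $\Gamma_j$ contributes no star of size $\ge n_j$ (the even split prevents $S_{n_j}$), that an imbalanced $\Gamma_i$ admits exactly one center and hence accepts at most one $H$-component, and that a balanced $\Gamma_i$'s two centers have capacities at most $n_i\le n_j$.

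The main obstacle is making the matching argument go through uniformly. Subtleties include: a balanced $\Gamma_i$ can internally host two disjoint $H$-components when $n_i$ is close to $n_j$; an imbalanced $\Gamma_i$ with large $n_i$ alone absorbs $S_{n_k}$ but then frees no additional centers in that component; and coincident values of the $n_i$ create multiplicity that must be tracked. The verification proceeds by placing $S_{n_k}$ first (forcing it into an imbalanced $\Gamma_i$ with $n_i$ large enough or into a balanced $\Gamma_i$ with $n_i=n_k$), then $S_{n_{k-1}}$, and so on, and showing that the count of remaining feasible slots runs out before all $k$ components can be placed. The demotion of $\Gamma_j$'s two centers below $n_j$, together with the single-center restriction in imbalanced components, is what propagates the shortage through the argument and precludes a full matching.
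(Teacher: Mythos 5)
Your coloring is, up to relabeling, exactly the one used in the paper: in $\Gamma_j-e$ split the $2n_j-2$ surviving vertices of $A_{u,j}$ evenly to make two blue stars of size $n_j-1$; in the other components make either a balanced pair of blue stars of sizes $n_i$ and $n_i-1$ or a single blue star of size $2n_i-1$; color everything else red. (The paper chooses balanced for $i<j$ and imbalanced for $i>j$ rather than splitting on $n_i\le n_j$ versus $n_i>n_j$, but when $i>j$ and $n_i=n_j$ both choices yield exactly one blue star of size $\ge n_j$ in that component, so this difference is immaterial.) Condition (ii) is immediate in both versions, and your structural observations about the blue subgraph --- it is a vertex-disjoint union of stars centered at the two neighbors of $A_{u,i}$, and a star can host at most one component of $H$ since every edge of a star meets its center --- are correct and are what the paper uses.

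The gap is in your verification of (i). You propose a sequential placement of $S_{n_k}, S_{n_{k-1}},\dots$ and explicitly leave unresolved the bookkeeping around balanced components hosting two $H$-components, repeated values of $n_i$, and the order of placement; as written this is a plan, not a proof, and greedy placement arguments do not by themselves establish that \emph{no} embedding exists. The missing idea is that you should not try to place all $k$ components: apply a single pigeonhole at the threshold $n_j$. Let $j'$ be least with $n_{j'}=n_j$. With your coloring, the maximal blue stars of size at least $n_j$ are exactly one per component $\Gamma_i$ with $n_i\ge n_j$ and $i\ne j$ (the star of size $n_i$ in a balanced component counts precisely when $n_i=n_j$, and each imbalanced component contributes its single star of size $2n_i-1$), for a total of $k-j'$. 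Any blue copy of $H$ would require $k-j'+1$ pairwise disjoint blue stars of size at least $n_j$, namely images of $S_{n_{j'}},\dots,S_{n_k}$, each lying in a distinct maximal blue star. This is impossible, and the small components of $H$ never need to be tracked. With this replacement your argument closes and coincides with the paper's.
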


\begin{proof}
Suppose that $e$ is an edge of some $\Gamma_j$, where $1 \le j \le k$, and that $e$ is at a vertex $v \in A_{u,j}$ for some $u \in A$. We color each edge in every $\Gamma_i$, minus the edge $e$ when $i=j$, by the following rules:

\textbf{Case 1:} $i<j$. Recall that $|A_{u,i}|=2n_i-1$ and that all vertices in $A_{u,i}$ share the same neighborhood $\{a,b\}$. Arbitrarily partition $A_{u,i}$ into sets $S$ and $T$ such that $|S|=n_i$ and $|T|=n_i-1$. Color all the edges in $E(S,a) \cup E(T,b)$ blue, where $E(S,a)$ denotes the set of all edges between the vertex set $S$ and the vertex $a$; this produces two blue stars of sizes $n_i$ and $n_i-1$, respectively. Color the rest of $\Gamma_i$ red.

\textbf{Case 2:} $i=j$. As before, let $a$ and $b$ be the vertices adjacent to each vertex in $A_{u,j}$. Partition $A_{u,j} \backslash v$ into sets $S$ and $T$ both of size $n_j-1$. Similarly to Case 1, we color all the edges in $E(S,a) \cup E(T,b)$ blue. This produces two blue stars of size $n_j-1$. Color the rest of $\Gamma_j$ red.

\textbf{Case 3:} $i>j$. Let $a$ be a vertex adjacent to each vertex in $A_{u,i}$. Color $E(A_{u,i},a)$ blue; this produces a blue star of size $2n_i-1$. As previously, we color the rest of $\Gamma_i$ red.

Denote the preceding coloring scheme by $c$. It is obvious from the preceding construction of $c$ that (ii) holds for our $u \in A$, so it remains to prove that (i) holds.

Let $j'$ be the least positive integer such that $n_{j'}=n_j$. Observe that we only produce blue stars of size at least $n_j$ in Case 3 and, if $j'<j$, in Case 1 also. Every $\Gamma_i$ such that $j'\le i \le k$ and $i \neq j$ contributes exactly one blue star of size at least $n_j$, so exactly $k-j'$ such blue stars are produced in $\Gamma-e$ in total. But $H$ contains $k-j'+1$ stars of size at least $n_j$, so no blue copy of $H$ can be produced in $\Gamma-e$ by the coloring $c$.
\end{proof}

We take the coloring $c$ of Lemma \ref{lem2}. To prove that $c$ is $(S(G),H)$-good, we need to show that $c$ does not produce a red copy of $S(G)$ in $\Gamma-e$.

Suppose to the contrary that there exists an embedding $\xi\colon S(G) \to \Gamma_i$ such that its graph image is a red copy of $S(G)$. Set $\varphi=\pi \circ \xi$, where $\pi\colon \Gamma_i \to S(G)$ is a projection that sends each vertex in $A_{u,i}$ to $u$ and is defined similarly to Eq.\ (\ref{eq1}). We prove that $\varphi$ is a self-embedding of $S(G)$, which would contradict the non--self-embeddability of $S(G)$. For illustration, we provide the following commutative diagram of graph homomorphisms:
\[\begin{tikzcd}[row sep=9mm,column sep=8mm]
    S(G) \arrow[hook,r,"\xi"] \arrow[dr,"\varphi"'] & \Gamma_i \arrow[two heads,d,"\pi"] \\
    & S(G)
\end{tikzcd}\]

Suppose that $\varphi(a)=b$ for some vertices $a$ and $b$ of $S(G)$. If $b \in A$, then the vertex $\xi(a)$ belongs in $A_{b,i}$. Recall that $\deg(a) \ge 2$ since $S(G)$ is leafless. Since the graph image of $\xi$ is red, $\xi(a)$ needs to be incident to at least two red edges as a result. We infer that $b \neq u$, where $u \in A$ is taken from Lemma \ref{lem2}(ii). This shows that the vertex $u$ of Lemma \ref{lem2}(ii) is not in the image of $\varphi$.

Since $S(G)$ is $C_4$-free and $\xi$ is an embedding, there cannot be a $C_4$ in the graph image of $\xi$. We now prove that $\varphi$ is injective. Let $a$ and $b$ be distinct vertices of $S(G)$. Since $a$ and $b$ have degree at least two, the vertices $\xi(a)$ and $\xi(b)$ also have degree at least two. As a result, $\xi(a)$ and $\xi(b)$ cannot both belong in $A_{u,i}$ for some $u \in A$, since that would create a $C_4$ in the graph image of $\xi$. Therefore, $\varphi$ is injective. This completes the proof that $\varphi$ is a self-embedding and finishes our proof of Theorem \ref{mainthm2}.
\end{proof}

\bibliographystyle{elsarticle-harv}
\bibliography{main}

\end{document}